%%%%%%%%%%%%%%%%%%          gtlatex.tem       %%%%%%%%%%%%%%%%%%
%
%  Template for articles written in LaTeX for publication in
%  G&T, G&TM and A&GT.  This template must be used with latex2e.  
%  If you use BiBTeX then you can collect the bibliography style 
%  file  gtart.bst  from the same directory as this file.  Full
%  instructions for using gtpart.cls are given in gtpartdoc.pdf.  
%
%
\documentclass{gtpart}     % Basic GT/GTM/AGT style
%
%   Uncomment one of the next three lines to obtain a full "mock-up"
%   of a published article:
%   A&GT:  \agtart     G&T:  \gtart   G&TM:  \gtmonart
%
%   NOTE:  Please do not place your article in a public place (eg
%          on the arXiv) in "mock-up" form unless it has been accepted
%          for publication in the relevant journal.
%
%\gtart  
%\agtart
%\gtmonart
%
%   Add necessary packages here.  Note that amsthm, amssymb and
%   amsmath are already loaded, so there is no need to add any 
%   of these.  Examples:
%
\usepackage{pinlabel}  %%% the recommended graphics+labelling package
\usepackage{graphicx}  %%% the recommended graphics package
%\usepackage[all]{xy}
%\usepackage{amscd}

% I added these:

\usepackage{amsmath}
\usepackage{amssymb}
\usepackage{amsthm}
%\usepackage{enumitem}
%\usepackage{tikz-cd}
%\usepackage{subcaption} %\captionsetup{compatibility=false}

% \addtolength{\textheight}{.1in}
%
%
%%% Start of metadata
%

\title{On Watanabe's theta graph diffeomorphism in the $4$--sphere}

%  First author
%
\author{David T Gay}
\givenname{David T}
\surname{Gay}
\address{Department of Mathematics\\ University
  of Georgia\\ Athens, GA 30602}
\email{dgay@uga.edu}
%\urladdr{}

%
%   Title and author both have running head options:
%
%   \title[Running head title]{Main title}
%   \author[Running head author]{Author}
%
% give a separate \keyword and \subject line for each keyword/phrase or 
% subject class eg \keyword{framed link} \subject{primary}{msc2000}{57M25}

%\keyword{}
% \subject{primary}{msc2010}{57M05}
% \subject{secondary}{msc2010}{20F05}

%
%  fill in the reference and password if your article is stored at the
%  arXiv eg \arxivreference{math.GT/0512347}  \arxivpassword{5spud}

%\arxivreference{}
%\arxivpassword{}

%
%  Leave the following items blank
%
\volumenumber{}
\issuenumber{}
\publicationyear{}
\papernumber{}
\startpage{}
\endpage{}
\doi{}
\MR{}
\Zbl{}
\received{}
\revised{}
\accepted{}
\published{}
\publishedonline{}
\proposed{}
\seconded{}
\corresponding{}
\editor{}
\version{}

%%% End of metadata
%
%%% Start of user-defined macros %%%
%
%   Theorem-type environments.  There are two predefined styles :
%
%   \theoremstyle{plain} : for theorems, corollaries etc with heading 
%   bold and left justified, optional note bracketed in roman type
%   and statement in slanted type.  This is the default style.
%
%   \theoremstyle{definition} : (alias remark)  for definitions, remarks 
%   etc with heading bold and left justified, optional note as before but
%   with statement in roman type.
%   
%   Some sample  \newtheorem's  (delete these unless you need
%   them and insert your own):
%
% \newtheorem{thm}{Theorem}[section]    % Standard theorem environment
% \newtheorem{lem}[thm]{Lemma}          % Lemma environment with numbering 
% %                                     % consecutive to theorems
% \newtheorem*{zlem}{Zorn's Lemma}      % A special unnumbered lemma.
% %
% \theoremstyle{definition}
% \newtheorem{defn}[thm]{Definition}    % Definition environment with 
% %                                     % numbering consecutive to theorems
% \newtheorem*{rem}{Remark}             % Unnumbered environment for remarks.

%
%   Type your macros (\newcommand's etc) below.
%

\sloppy

%%%%% style macros (sections, theorems, etc.) %%%%%

\newtheorem{theorem}{Theorem}

\theoremstyle{definition}

% \numberwithin{theorem}{section}
% 
% \numberwithin{equation}{section}

%%%%% standard macros %%%%%
\def\Z{\mathbb Z}
\def\N{\mathbb N}

\def\R{\mathbb R}

\def\cH{\mathcal{H}}

\newcommand{\into}{\ensuremath{\hookrightarrow}}

\newcommand{\Diff}{\mathop{\rm Diff}\nolimits}
\newcommand{\Emb}{\mathop{\rm Emb}\nolimits}

\newcommand{\Mod}{\mathop{\rm Mod}\nolimits}
\newcommand{\wat}{\mathop{\rm wat}\nolimits}

% \def\blue#1{{\textcolor{blue}{#1}}}
% \def\green#1{{\textcolor{green}{#1}}}
% \def\purple#1{{\textcolor{purple}{#1}}}
% \def\orange#1{{\textcolor{orange}{#1}}}
% \def\teal#1{{\textcolor{teal}{#1}}}

%%% End of user-defined macros %%%

\begin{document}

\begin{abstract}    % type your abstract below
Watanabe's theta graph diffeomorphism, constructed using Watanabe's clasper surgery construction~\cite{WatanabeExotic} which turns trivalent graphs in $4$--manifolds into parameterized families of diffeomorphisms of $4$--manifolds, is a diffeomorphism of $S^4$ representing a potentially nontrivial smooth mapping class of $S^4$. The ``$(1,2)$--subgroup'' of the smooth mapping class group of $S^4$ is the subgroup represented by diffeomorphisms which are pseudoisotopic to the identity via a Cerf family with only index $1$ and $2$ critical points. This author and Hartman~\cite{GayHartman} showed that this subgroup is either trivial or has order $2$ and explicitly identified a diffeomorphism that would represent the nontrivial element if this subgroup is nontrivial. Here we show that the theta graph diffeomorphism is isotopic to this one possibly nontrivial element of the $(1,2)$--subgroup. To prove this relation we develop a diagrammatic calculus for working in the smooth mapping class group of $S^4$.
\end{abstract}

\maketitle

%%%%%%%%%%%%%%%%%%%%   Start of main body of article

This paper exhibits a relation in the smooth mapping class group of $S^4$, which we denote $\Mod(S^4) = \pi_0(\Diff^+(S^4))$, where $\Diff^+(S^4)$ is the group of orientation preserving diffeomorphisms of $S^4$. In other words, we will exhibit a smooth isotopy from one diffeomorphism of $S^4$ to another. Part of the point is that the two diffeomorphisms involved are each of independent interest and arise in different contexts and thus exhibiting this relation improves our understanding of both diffeomorphisms. Another point is to illustrate new techniques for understanding relations in the smooth mapping class groups of $4$--manifolds which have the potential to be useful in other contexts.

It is important to emphasize that we do not at this time know whether or not $\Mod(S^4)$ is trivial. Of course, if it is trivial then our relation is trivially true, but it is possible that either $\Mod(S^4)$ is nontrivial or that to prove that $\Mod(S^4)$ is trivial requires tools similar to those developed in this paper.

The relation we prove has the form
\[ \wat(\Theta) = \cH(\alpha(1)) \in \Mod(S^4), \]
so we now explain what the left and right hand sides of the equation mean. Briefly, for those for whom this is meaningful:
\begin{itemize}
 \item $\Theta$ is the theta graph embedded in $S^4$,
 \item $\wat(\cdot)$ denotes Watanabe's clasper surgery construction~\cite{WatanabeExotic} for producing families of diffeomorphisms from trivalent graphs,
 \item $\wat(\Theta)$ is (the smooth isotopy class of) Watanabe's theta graph diffeomorphism~\cite{WatanabeExotic},
 \item $\alpha(1)$ is the first in a countable list $\alpha(i)$ of generators of $\pi_1(\Emb(S^1,S^1 \times S^3))$ established by Budney and Gabai in~\cite{BudneyGabai}, where $\Emb(A,B)$ denotes the space of embeddings of $A$ in $B$,
 \item $\cH$ is a Cerf-theoretic homomorphism from $\pi_1(\Emb(S^1,S^1 \times S^3))$ to $\pi_0(\Diff^+(S^4))$ described in~\cite{GayDiffeos}, and
 \item $\cH(\alpha(1))$ is shown by this author and Hartman in~\cite{GayHartman} to have order at most $2$ and to generate the subgroup $\Mod^{1,2}(S^4)$ of $\Mod(S^4)$ of all diffeomorphisms which are pseudoisotopic to the identity via a pseudoisotopy realizable by a Cerf family involving only critical points of index $1$ and $2$.
\end{itemize}

This relation is also proved by Kosanovi\'c as Corollary~1.11 in~\cite{Kosanovic}. The current paper was completed after the initial posting of~\cite{Kosanovic} and so is best interpreted as an alternative proof of Kosanovi\'c's result, naturally building on many similar techniques, and showcasing a diagrammatic toolbox. In Kosanovi\'c's notation, what we call $\cH(\alpha(1))$ is denoted $W(1)$.

Given an oriented trivalent graph $\Gamma$ with $2k$ vertices embedded in a $4$--manifold $X$, with the property that no vertex has all incoming or all outgoing edges, Watanabe~\cite{WatanabeExotic} describes a ``clasper surgery'' construction which produces an element $\wat(\Gamma) \in \pi_{k-1}(\Diff^+(X))$. Note that Watanabe does not use the notation $\wat(\cdot)$, this is notation we have adopted following~\cite{Kosanovic}. For certain values of $k$ and certain graphs $\Gamma \subset X = B^4$, but {\em not including} the case $k=1$, Watanabe is then able to show that $\wat(\Gamma)$ is nontrivial, thus disproving the smooth $4$--dimensional Smale conjecture. In the case $k=1$ the only interesting graph to consider is the theta graph, which we denote $\Theta$, and thus a fundamental question related to Watanabe's work is whether or not $\wat(\Theta) \in \Mod(B^4)$ is trivial. Note that all embeddings of $\Theta$ in a simply connected $4$--manifold are isotopic so we do not need to specify the embedding. Also note that $\Mod(B^4) = \Mod(S^4)$, so we will work in $S^4$ rather than $B^4$.

Of course, if one shows that $\wat(\Theta)$ is nontrivial one will have shown the dramatic result that $\Mod(S^4)$ is nontrivial, but if one shows that $\wat(\Theta)$ is trivial one has only shown that the clasper surgery construction fails to detect nontriviality of $\Mod(S^4)$. Here we thread the needle between these two possibilities, showing that $\wat(\Theta)$ is equal to another mapping class $\cH(\alpha(1))$ about which a little more is known.

Instead of studying diffeomorphisms up to isotopy one may first study diffeomorphisms up to {\em pseudoisotopy}: A diffeomorphism $\phi:X \to X$ is pseudoisotopic to the identity if there exists a diffeomorphism $\Phi: [0,1] \times X \to [0,1] \times X$ which is the identity on $\{0\} \times X$ and $[0,1] \times \partial X$ and equals $\phi$ on $\{1\} \times X$. If $\Phi$ is level preserving then $\Phi$ is actually an isotopy. In fact every diffeomorphism of $S^4$ is pseudoisotopic to the identity; see~\cite{GayDiffeos} for a proof that this follows easily from the fact~\cite{KervaireMilnor} that there are no exotic $5$--spheres.

A pseudoisotopy $\Phi : [0,1] \times X \to [0,1] \times X$ gives rise to a pair of Morse functions $g_0,g_1: [0,1] \times X \to [0,1]$, where $g_0$ is simply projection to $[0,1]$ and $g_1 = g_0 \circ \Phi$. Both $g_0$ and $g_1$ are Morse functions without critical points. Cerf's key observation is that if these can be connected by a homotopy $g_t$ such that each $g_t$ does not have critical points then $\Phi$ is isotopic to an isotopy, and thus that the problem of turning pseudosotopies into isotopies can be turned into the problem of simplifying $1$--parameter families, called ``Cerf families'', of Morse functions (allowing births and deaths) between Morse functions without critical points. In our case, when the dimension of $X$ is $4$ and $X$ is simply connected, one can always arrange that the critical points that do arise are only of index $2$ and $3$~\cite{HatcherWagoner}, and in special cases one can arrange that they are only of index $1$ and $2$. When the critical points are only of index $1$ and $2$, the Cerf family is significantly easier to study.

The subgroup of $\Mod(S^4)$ represented by diffeomorphisms which are pseudoisotopic to the identity via a pseudoisotopy with a Cerf family involving only critical points of index $1$ and $2$ is known as ``the $(1,2)$ subgroup'' and denoted $\Mod^{1,2}(S^4)$. In~\cite{GayDiffeos} this author exhibited a group homomorphism $\cH: \pi_1(\Emb(S^1,S^1 \times S^3),*) \to \Mod(S^4)$ and showed that the image is exactly $\Mod^{1,2}(S^4)$. The basepoint $*$ is the embedding of $S^1$ as $S^1 \times \{p\}$ for some $p \in S^3$. Budney and Gabai computed $\pi_1(\Emb(S^1,S^1 \times S^3),*)$ and showed that it is free abelian with a countable, explicitly described, list of generators $\alpha(i)$, $i \in \N$. This author and Hartman~\cite{GayHartman} then showed that in fact $\Mod^{1,2}(S^4)$ is generated just by $\cH(\alpha(1))$ and that $\cH(\alpha(1))$ is either trivial or of order $2$.

From the description of $\wat(\Theta)$ which we will give in the next section, or from a general discussion of pseudosotopies and Watanabe's constructions by Botvinnik and Watanabe~\cite{BotvinnikWatanabe}, it is not hard to see that $\wat(\Theta) \in \Mod^{1,2}(S^4)$. Thus, expressing $\wat(\Theta)$ in terms of $\cH(\alpha(1)$ should be a step in the direction of resolving the triviality or nontriviality of $\wat(\Theta)$.

\begin{theorem} \label{T:Main}
 Watanabe's theta graph diffeomorphism $\wat(\Theta) \in S^4$ is trivial if and only if the $(1,2)$--subgroup $\Mod^{1,2}(S^4)$ is trivial. In particular, $\wat(\Theta) = \cH(\alpha(1))$, and $\cH(\alpha(1))$ is known to be of order at most $2$ and to generate $\Mod^{1,2}(S^4)$.
\end{theorem}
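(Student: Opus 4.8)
The ``if and only if'' statement is a formal consequence of the equality $\wat(\Theta) = \cH(\alpha(1))$ together with the two facts quoted above from~\cite{GayHartman}: that $\cH(\alpha(1))$ generates $\Mod^{1,2}(S^4)$ and that it has order at most $2$. Indeed, given the equality, if $\Mod^{1,2}(S^4)$ is trivial then $\wat(\Theta) = \cH(\alpha(1))$ is trivial, while conversely if $\wat(\Theta) = \cH(\alpha(1))$ is trivial then, since this element generates $\Mod^{1,2}(S^4)$, the entire subgroup is trivial. So the whole content of the theorem is the isotopy $\wat(\Theta) = \cH(\alpha(1))$, and this is what I would set out to prove.

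The plan is to put both sides into a common normal form. Since we already know (from the description of $\wat(\Theta)$, or from~\cite{BotvinnikWatanabe}) that $\wat(\Theta) \in \Mod^{1,2}(S^4)$, both $\wat(\Theta)$ and $\cH(\alpha(1))$ are represented by pseudoisotopies whose Cerf families use only index $1$ and $2$ critical points. My first step is therefore to set up the diagrammatic calculus promised in the introduction: a way of recording such a Cerf family by a diagram that tracks the attaching and belt sphere data of the index $2$ (and index $1$) handles as they are born, slid, and die, equivalently a movie of the associated loop in $\Emb(S^1, S^1 \times S^3)$. The key feature I want from the calculus is a complete list of elementary moves---births and deaths of cancelling pairs, handle slides, and isotopies of the embedded data---that do not change the resulting mapping class, so that equality of two mapping classes can be certified by reducing their diagrams to a common form.

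With the calculus in place I would compute the diagram for each side separately. For the right-hand side this is essentially bookkeeping: the homomorphism $\cH$ is defined by turning a loop in $\Emb(S^1, S^1 \times S^3)$ into a Cerf family, so the diagram of $\cH(\alpha(1))$ is read directly off the Budney--Gabai generator $\alpha(1)$, which has a simple explicit description. For the left-hand side I would unpack Watanabe's clasper surgery on $\Theta$: the theta graph has two trivalent vertices and three edges, and in the $k=1$ case the surgery is modeled on a standard local picture, built from the Borromean rings and Hopf links underlying a degree $1$ clasper, that produces a single diffeomorphism rather than a higher family. Translating this local model into the diagrammatic calculus is the step at which the two constructions first meet on common ground.

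The main obstacle, and the technical heart of the argument, is the final step: showing that the diagram extracted from $\wat(\Theta)$ reduces, through the elementary moves of the calculus, to the diagram of $\cH(\alpha(1))$---equivalently, that the loop of embeddings encoded by the theta graph clasper is homotopic in $\Emb(S^1,S^1 \times S^3)$ to $\alpha(1)$. I expect this to require carefully following the clasper's local Borromean structure through a sequence of handle slides and isotopies. The delicate point is that the argument must establish the equality at the level of the underlying diagrammatic data, since we have no independent invariant detecting nontriviality in $\Mod^{1,2}(S^4)$ and therefore cannot shortcut the comparison by evaluating a $\Z/2$--valued quantity on each side.
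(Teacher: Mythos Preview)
Your plan is correct and matches the paper's approach almost exactly: set up a diagrammatic calculus for loops of $5$--dimensional $1$--/$2$--handlebodies, identify the diagrams for $\wat(\Theta)$ and $\cH(\alpha(1))$, and connect them by elementary moves. The one ingredient you do not anticipate, and which turns out to be the main simplifying device in the paper's calculation, is an \emph{additive decomposition} move: because $\Mod(S^4)$ is abelian and the spins around distinct spinning spheres commute, a diagram with two spinning spheres represents the \emph{sum} of the two diagrams obtained by erasing one sphere at a time, and at each stage one of the summands cancels trivially; without this trick the handleslide complexity grows rather than shrinks.
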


\section{Diagrammatic notation} \label{S:Diagrams}

Central to our argument is something like a Kirby calculus for families of $5$--dimensional handlebodies. At the moment this is far from being a complete calculus, but rather an ad hoc collection of tools that works in the context at hand. Consider the two diagrams in \autoref{F:BeginningAndEnd}. The diagram on the left is an illustration of $\wat(\Theta)$ and the diagram on the right is an illustration of $\cH(\alpha(1))$; these two statements will be justified in Section~\ref{S:BeginningAndEnd}. The purpose of this paper is to show how to get from the left diagram to the right diagram using various moves. First we explain what both of these diagrams, and the intermediate diagrams in our moves, mean.

\begin{figure}
 \labellist
 \small\hair 2pt
 \pinlabel {$\wat(\Theta) =$} [r] at 0 50
 \pinlabel {$=$} at 157 50
 \pinlabel {$= \cH(\alpha(1))$} [l] at 260 50
 \endlabellist
 \centering
 
 \includegraphics[width=.6\textwidth]{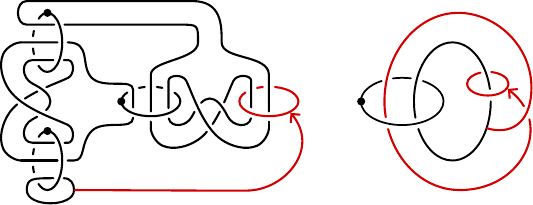}
 \caption{The beginning and the end of our forthcoming sequence of diagrams. Section~\ref{S:Diagrams} explains the meaning of the two diagrams in this figure, Section~\ref{S:BeginningAndEnd} justifies the left-most and right-most equals signs, and Section~\ref{S:Sequence} gives a sequence of intermediate diagrams justifying the middle equals sign. \label{F:BeginningAndEnd}}
\end{figure}

Each diagram is meant to illustrate a collection of unknotted $S^2$'s in $S^4$, bounding obvious $B^3$'s, together with a $1$--parameter family of embeddings of some number of $S^1$'s in the complement of these $S^2$'s. After explaining how to see this, we will show how such an embedding gives rise to a smooth mapping class of $S^4$.

Each figure is drawn in $\R^3$ seen as a slice $\{(x_1,x_2,x_3,x_4) \mid x_4=0\}$ of $\R^4 = \{(x_1,x_2,x_3,x_4)\}$, which in turn sits in $S^4 = \R^4 \cup \{\infty\}$. The figures have four diagrammatic elements: (1) dotted circles, always unknotted and drawn in black, (2) possibly knotted and linked black circles, without dots, (3) red unknotted circles and (4) red arrows, each pointing from one of the undotted black circles to one of the red circles. Each red circle should have exactly one red arrow pointing to it, but each black undotted circle can have arbitrarily many red arrows attached to it. We explain each of these as follows:
\begin{enumerate}
 \item Each black dotted circle bounds an obvious flat disk, and this disk shrinks to a point in $x_4 <0$ and $x_4 > 0$, thus describing a $3$--ball and its boundary $S^2$. Abusing terminology a bit, we henceforth refer to the black dotted circles themselves as ``dotted $S^2$'s''.
 \item Each black undotted circle is the starting and ending position of a $t$--parameterized family, for $t \in [0,1]$, of embeddings of $S^1$ into $S^4$. In other words, for each undotted black circle, we are working towards describing a family of embeddings $\alpha_t: S^1 \into S^4$ such that the drawn black circle is $\alpha_0(S^1) = \alpha_1(S^1)$. For this reason we call these black undotted circles ``basepoint circles''.
 \item Each red circle represents an embedded $S^2$ in $S^4$ given in the usual way: the circle illustrated lies in $\{x_4=0\}$ and shrinks to a point for $x_4>0$ and $x_4 < 0$. We call each of these red $S^2$'s a ``spinning sphere'', for reasons which should be clear shortly. This $S^2$ is actually the trace of a $t$--parameterized family of arcs, for $t \in [1/4,3/4]$. To see how to interpret this $S^2$ as a family of arcs, we first declare that at times $t=1/4$ and $t=3/4$ the arc is a small interval neighborhood in the illustrated red circle centered on the point at which the red arrow meets the red circle. Reparameterizing the $S^2$ so that this short arc is a line of meridian from north to south pole, then as $t$ ranges from $1/4$ to $3/4$ this arc spins around the sphere from west to east. We illustrate this in \autoref{F:SpinningSphere} by redrawing a local picture so that the spinning sphere lies entirely in $\{x_3=0\}$ and arcs of basepoint circles that link the spinning sphere appear as points in the interior of the sphere. One should properly adopt an orientation convention, so that an orientation on the red circle unambiguously determines the direction in which the arc spins around the spinning sphere, but in the argument in this paper this will not be important. In other words, our results hold no matter which orientation we choose.
 \item Each arrow represents a band leading from a basepoint circle to a spinning sphere. This band is the trace of a $t$--parameterized family of arcs for $t \in [0,1/4]$ and $t \in [3/4,1]$. At $t=0$ and $t=1$ this arc is a short arc in the basepoint circle centered at the point where the arrow starts, and then as $t$ ranges from $0$ to $1/4$ the arc performs a finger move along the band until it meets the spinning sphere; from $3/4$ to $1$ the arc simply reverses what it did from $0$ to $1/4$. This is illustrated in \autoref{F:FingerAlongArrow}.
\end{enumerate}
\begin{figure}
 \labellist
 \small\hair 2pt
 \pinlabel $=$ at 82 105
 \pinlabel $\{x_4=0\}$ at 25 85
 \pinlabel $\{x_3=0\}$ at 110 85
 \pinlabel $t=1/4$ at 15 40
 \pinlabel $t=3/4$ at 142 32
 \pinlabel $\to$ at 36 56
 \pinlabel $\to$ at 79 56
 \pinlabel $\to$ at 121 56
 \pinlabel $\to$ at -4 16
 \pinlabel $\to$ at 36 16
 \pinlabel $\to$ at 79 16
 \pinlabel $\to$ at 121 16
 \endlabellist
  \centering
 \includegraphics[width=.6\textwidth]{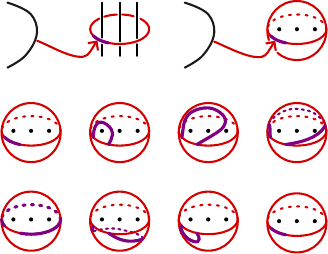}
 \caption{A spinning sphere as a $t$--parameterized family of arcs, for $t \in [1/4,3/4]$. In the top line we first draw the figure in the $3$--dimensional slice $\{x_4=0\}$, so that the sphere appears as a red circle with three linking black arcs, and we then draw the sphere in the slice $\{x_3=0\}$ so that we see the entire sphere, but the arcs now appear as three black dots. \label{F:SpinningSphere}}
\end{figure}
\begin{figure}
 \labellist
 \small\hair 2pt
 \pinlabel $t=0,1$ at 120 37
 \pinlabel {$t \in (0,1/4) \cup (3/4,1)$} at 204 37 
 \pinlabel $t=1/4,3/4$ at 295 37
 \endlabellist
 \centering
 \includegraphics[width=\textwidth]{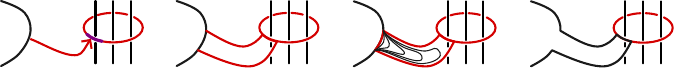}
 \caption{A finger move along an arrow and back. \label{F:FingerAlongArrow}}
\end{figure}
The upshot is that a diagram such as one of the ones shown in \autoref{F:BeginningAndEnd} describes, up to isotopy, an embedding $\beta: \amalg^n B^3 \into S^4$ of some number of $3$--balls into $S^4$, together with a $t$--parameterized loop of embeddings $\alpha_t : \amalg^m S^1 \into S^4 \setminus \beta(\amalg^n \partial B^3)$ of some number of $S^1$'s into the complement of the boundaries of the $3$--balls. The loop $\alpha_t$ is such that each $S^1$ starts as one of the basepoint circles drawn in the diagram and then sticks out a finger along the various red arrows based at that basepoint circle, and then the tip of each such finger spins once around the spinning sphere at the end of the relevant red arrow, and then the finger retracts back along the red arrow.

Note that here we are declaring that when there are multiple spinning spheres, we are still describing a family with one time parameter $t$. One could adopt the convention that each spinning sphere gives its own time parameter, in which case the parameter space would be $(S^1)^k$, where $k$ is the number of spinning spheres. This is in fact how things work with Watanabe's general construction, as described briefly in the next section. However, in the diagrams in this paper all spinning spheres correspond to the same time parameter. As described above, all spinning spheres spin simultaneously in the time interval $t \in [1/4.3/4]$ but it is clear that these spinnings commute with each other and so in fact we can choose to spin around the spheres one at a time in any order, and this will be important in the main calculation of this paper.

First, given $\beta$ and $\alpha_t$, we will build a $6$--manifold $Z$ which is a $5$--manifold bundle over $S^1$, with the fiber over $t \in S^1$ being a $5$--manifold $Z_t$. To do this, let $W^*$ be the result of pushing the interior of each component of $\beta(\amalg^n B^3) \subset S^4 \subset B^5$ into the interior of $B^5$ and carving out a small tubular neighborhood of these $B^3$'s in $B^5$. Thus $W^*$ is diffeomorphic to $\natural^n S^1 \times B^4$, the result of attaching $n$ $1$--handles to $B^5$, and $\partial W^* \cong \#^n S^1 \times S^3$. This is the $5$--dimensional analogue of dotted circle notation in $4$--dimensional Kirby calculus. Now we can view $\alpha_t$ as a family of embeddings of $\amalg^m S^1$ into $\partial W^*$. Extend this to a family of embeddings $\overline{\alpha_t} : \amalg^m S^1 \times B^3 \into \partial Z^*$ using the $0$--framings coming from the original embeddings into $S^4$; here we need to use the fact that $\pi_2(SO(4))$ is trivial in addition to the usual fact that there are two framings of a circle in a $4$--manifold because $\pi_1(SO(4)) = \Z/2\Z$. Let $Z_0 = S^1 \times W^*$. The trace of the family $\overline{\alpha_t}$ can be interpreted as an embedding $\overline{\alpha}: \amalg^m S^1 \times S^1 \times B^3 \into \partial Z_0 = S^1 \times \partial W^*$, with the map on the first $S^1$ factor being the identity. In other words, $\overline{\alpha}(t,p) = (t,\overline{\alpha_t}(p))$. Use this as the gluing map for an $S^1$--parameterized family of $5$--dimensional $1$--handles (equivalently, $6$--dimensional round $1$--handles) attached to $Z_0$, and the resulting $6$--manifold is $Z$. The fiber $Z_t$ over $t \in S^1$ is the $5$--manifold built by attaching $m$ $5$--dimensional $1$--handles to $W^*$ with gluing map $\overline{\alpha_t}$. Since these gluing maps vary smoothly with $t$ and since $\overline{\alpha_1} = \overline{\alpha_0}$, this builds $Z$ as a bundle over $S^1$ with fiber $Z_t$. Furthermore, $\partial Z$ is a bundle over $S^1$ with $4$--dimensional fiber $\partial Z_t$. If we let $X = \partial Z_0 = \partial Z_1$, then the monodromy of this bundle is a well defined element of $\Mod(X)$.

Note that the $5$--dimensional $4$--manifold bundle $\partial Z$ could be described via parameterized $4$--dimensional surgery, rather than via parameterized $5$--dimensional handle attachment, and the arguments in this paper could be rewritten in that language. Kosanovic's presentation~\cite{Kosanovic} is given in terms of surgery. However, perhaps just due to personal limitations, the author of this paper finds it easier to think in terms of handle attachment than in terms of surgery. Also, the surgery description makes the connection to Cerf's approach to pseudoisotopies clearer. Watanabe's description of how to turn a trivalent graph into an element of $\pi_{k-1}(\Diff^+(X))$ is given in terms of surgery, but is easily reinterpreted in terms of handle attachment.

To see that, in our examples, the $4$--manifold $X$ sitting over $t=0$ is diffeomorphic to $S^4$, note that when $t=0$ the arrows and the spinning spheres do not come into play. In the two diagrams in \autoref{F:BeginningAndEnd}, ignoring the arrows and the spinning spheres one sees that the $5$--dimensional $1$-- and $2$--handles all cancel, so that the $5$--manifold $Z_0$ is diffeomorphic to $B^5$. We do not need to worry about whether or not the diffeomorphism from $X = \partial Z_0$ to $S^4$ is canonical because $\Mod(S^4)$ is abelian, and thus the isomorphism from $\Mod(X)$ to $\Mod(S^4)$ given by one diffeomorphism of $X$ to $S^4$ is equal to that given by any other such diffeomorphism.

Returning to \autoref{F:BeginningAndEnd}, we claim that with the above interpretation of the diagrams, the diagram on the left represents $\wat(\Theta)$ and the diagram on the right represents $\cH(\alpha(1))$. In the next section we will explain this, and in the following section we will show how to transform the one diagram into the other using a sequence of moves and intermediate diagrams.

\section{The beginning and end diagrams} \label{S:BeginningAndEnd}
Watanabe's prescription~\cite{WatanabeExotic} for turning an oriented trivalent graph $\Gamma$ in a $4$--manifold $X$ into an element of $\pi_{k-1}(\Diff^(X))$ can be summarized as follows:
\begin{itemize}
 \item Separate the vertices into those of type I, with $2$ incoming edges and $1$ outgoing edge, and those of Type II with $1$ incoming edge and $2$ outgoing edges.
 \item At each type I vertex place a copy of the $4$--dimensional Borromean rings. This is an embedding of $S^2 \amalg S^2 \amalg S^1$ in $B^4$ characterized by the properties that, ignoring the $S^1$, the two $S^2$'s bound disjoint $B^3$'s, and that the $S^1$ reads off the commutator of the meridians of the two $S^2$'s. Arrange for the two incoming edges to land on the two $S^2$'s and for the one outgoing edge to connect to the $S^1$.
 \item At each type II vertex also place a copy of the $4$--dimensional Borromean rings, but now interpret one of the $S^2$'s as a ``spinning sphere'' as in the preceding section, i.e. an $S^1$--parameterized family of arcs spinning around a north and south pole of the $S^2$. Arrange for the one incoming edge to connect to the $S^2$ that does not spin, for one of the outgoing edges to connect to the $S^1$, and for the other outgoing edge to connect to the spinning sphere - this outgoing edge should really be seen as a band which meets the spinning sphere along the initial arc in the sphere's family of arcs.
 \item In the middle of each edge place a copy of the $4$--dimensional Hopf link. This is an unknotted $S^2$ and its linking meridian $S^1$. Arrange that the outgoing half of that edge connects to the $S^2$ and that the incoming half of that edge connects to the $S^1$.
 \item Along the outgoing half of each edge, tube the $S^2$ from the Hopf link at the middle of the edge to the $S^2$ from the Borromean rings at the vertex at the end of the edge.
 \item Along the incoming half of each edge, there is an $S^1$ coming from the Hopf link at the middle of the edge and either an $S^1$ or a spinning sphere from a Borromean rings at the incoming end of the edge. In the $S^1$ case, simply band the two $S^1$'s together. In the spinning sphere case, band the $S^1$ at the Hopf link to the spinning sphere to create an $S^1$--parameterized family of $S^1$'s.
 \item The result is an embedding $\beta$ of $\amalg^e S^2$ into $X$, where $e$ is the number of edges, and an $(S^1)^k$--parameterized family $\alpha_{\mathbf{t}}$, with $\mathbf{t} \in (S^1)^k$, of embeddings of $\amalg^e S^1$ into $X \setminus \beta(\amalg^e S^2)$, where $k$ is the number of type II vertices, which is equal to the number of type I vertices. Here each spinning sphere is interpreted as introducing its own independent parameter.
\end{itemize}

Having done this, Watanabe then wants us to interpret this as an $(S^1)^k$--parameterized family of surgeries along $e$ $S^2$'s and $e$ $S^1$'s, giving a $4$--manifold bundle over $(S^1)^k$. With more work, not relevant when $k=1$, one shows that this bundle is trivial over the $(k-1)$--skeleton of $(S^1)^k$ and thus gives a well defined element of $\pi_{k-1}(\Diff^+(X))$. To turn this $4$--dimensional surgery description into a $5$--dimensional handlebody description, one might think of the $S^2$'s as attaching spheres for $5$--dimensional $3$--handles, but since the $S^2$'s bound standard $B^3$'s which do not move as the parameters vary (only the $S^1$'s move), these $S^2$'s can just as well be interpreted as dotted $S^2$'s describing $5$--dimensional $1$--handle attachments, and this is the perspective we take here.

In the case of the theta graph, $k=1$ so there is only one spinning sphere. Applying the recipe above to the theta graph gives \autoref{F:WTheta}, which uses a slight variation of the notation described in the preceding section, including dotted spheres connected by arcs to indicate spheres that should be tubed together along the arcs.
\begin{figure}
 \labellist
 \Huge\hair 2pt
 \pinlabel $\rightsquigarrow$ at 47 44
 \endlabellist
 \centering
 \includegraphics[width=.8\textwidth]{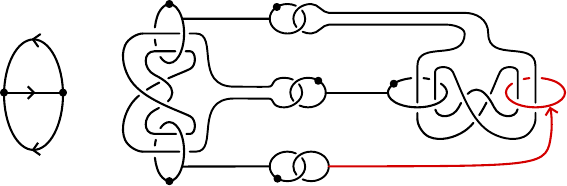}
 \caption{Watanabe's construction applied to the theta graph. \label{F:WTheta}}
\end{figure}
When two dotted spheres are represented by their dotted equatorial circles in our diagrams, and tubed along an arc in the diagram, the resulting sphere is described by the band sum of the two dotted equatorial circles in the diagram. This observation and a simple isotopy turns the diagram on the right of \autoref{F:WTheta} into the diagram on the left of \autoref{F:BeginningAndEnd}. Thus we see that \autoref{F:BeginningAndEnd} does indeed start with a diagrammatic description of $\wat(\Theta)$.

To see that the diagram on the right of \autoref{F:BeginningAndEnd} represents $\cH(\alpha(1))$, first note that the diagram does describe a loop $\alpha_t: S^1 \into S^1 \times S^3$ since surgery along the single dotted $S^2$ turns $S^4$ into $S^1 \times S^3$. With this in mind, this is essentially exactly the same as the description of $\alpha(1)_t$ appearing in~\cite{BudneyGabai},~\cite{GayDiffeos} and~\cite{GayHartman}. The homomorphism $\cH: \pi_1(\Emb(S^1,S^1 \times S^3)) \to \Mod(S^4)$ is nothing more than the same parameterized handle attachment construction described in the preceding section: Think of $S^1 \times S^3$ as the boundary of $S^1 \times B^4$, which is the result of attaching a single $5$--dimensional $1$--handle to $B^5$. A loop of $S^1$'s then gives a loop of attaching data with which to attach a $5$--dimensional $2$--handle, and with this one builds a $5$--manifold bundle over $S^1$ whose boundary is a $4$--manifold bundle over $S^1$ with some monodromy. Because our loop of $S^1$'s starts and ends at $S^1 \times \{p\} \subset S^1 \times S^3$, the fiber of this $4$--manifold bundle is diffeomorphic to $S^4$ and thus the monodromy gives an element of $\Mod(S^4)$. Thus we see that $\cH(\alpha(1))$ is exactly the mapping class described by the figure on the right in \autoref{F:BeginningAndEnd}.

\section{A sequence of diagrams} \label{S:Sequence}
In this section we will show how to transform the parameterized $5$--dimensional handle diagram on the left in \autoref{F:BeginningAndEnd} into the diagram on the right in the same figure. We will use parameterized handle cancellation and handleslides as well as homotopies of our families of embeddings of $S^1$'s.

\begin{proof}[Proof of Theorem~\ref{T:Main}]
 Since the diagram on the left in \autoref{F:BeginningAndEnd} represents $\wat(\Theta)$ and the diagram on the right represents $\cH(\alpha(1))$, by showing how to transform one diagram into the other and showing that the moves we use in this transformation respect the corresponding element of $\Mod(S^4)$, we will complete the proof of Theorem~\ref{T:Main}.
 
 Figure~\ref{F:LabelledHandles} reproduces our initial diagram for $\wat(\Theta)$ and introduces labels $a$, $b$ and $c$ for the three $1$--handles and labels $A$, $B$ and $C$ for the three $2$--handles. Recall that this describes a $t$--parameterized family of $5$--dimensional handlebodies, but that the $2$--handle $A$ is the only $2$--handle that is actually changing with $t$. Also, note that at any time $t$, all handles can easily be cancelled, but that the cancellation requires handleslides and isotopies to arrange that for each $1$--handle there is exactly one $2$--handle which goes over it once, i.e. cancels it, and that the other $2$--handles do not go over that $1$--handle at all. At time $t=0$, when the attaching circle for $2$--handle $A$ is simply the small linking meridian to the dotted $2$--sphere for $1$--handle $a$, this is easy to do: Slide $2$--handle $B$ over $2$--handle $A$ twice to get $B$ off of $a$, then cancel $A$ and $a$, then slide $2$--handle $C$ over $B$ twice to get $C$ off of $b$ and cancel $B$ and $b$, and finally cancel $C$ and $c$. However, once $A$ starts to move, then the slid version of $B$ will move, and so forth, and we will need to be much more careful.
 \begin{figure}
  \labellist
  \small\hair 2pt
  \pinlabel $a$ at 18 37
  \pinlabel $A$ at 12 0
  \pinlabel $b$ at 53 50
  \pinlabel $B$ at 44 40
  \pinlabel $c$ at 18 93
  \pinlabel $C$ at 104 99
  \endlabellist\centering
  \includegraphics[width=.5\textwidth]{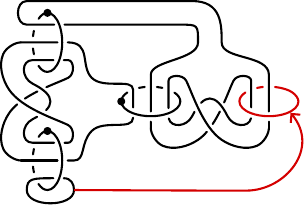}
  \caption{$\wat(\Theta)$ with labelled handles. \label{F:LabelledHandles}}
 \end{figure}

 There are three simplifying moves that we can use to keep the handleslide complexity from exploding too quickly. The first is to observe that, when isotoping $1$--dimensional elements of our diagrams, we can freely change crossings since this isotopy is happening in $\R^4$ not in $\R^3$. This applies to crossings between ``ordinary'' $S^1$'s, crossings between arrows, and crossings between ``ordinary'' $S^1$'s and arrows.
 
 The second move is to observe that a spinning sphere around multiple strands of ``ordinary'' $S^1$'s can be split into multiple spinning spheres, one around each strand, as illustrated in Figure~\ref{F:SplittingSpinningSpheres}.
 \begin{figure}
 \labellist
 \Large\hair 2pt
 \pinlabel $=$ at 80 15
 \endlabellist
 \centering
  \includegraphics[width=.6\textwidth]{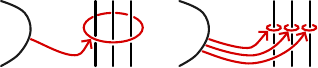}
  \caption{Splitting spinning spheres. \label{F:SplittingSpinningSpheres}}
 \end{figure}
 
 The third move is to exploit the fact that when spinning around mutiple spheres to describe an $S^1$--parameterized family, we can spin around them in any order as discussed earlier. Using the fact that $\Mod(S^4)$ is abelian we can use additive notation in $\Mod(S^4)$ and, interpreting each diagram as an element of $\Mod(S^4)$, we derive the diagrammative move shown in Figure~\ref{F:CommutingSpins}. This move is perhaps better described in words: Given a diagram $D$ with two red arrows $x$ and $y$ pointing to red spinning spheres $X$ and $Y$, respectively, consider the diagram $D_1$ obtained by erasing $x$ and $X$ and the diagram $D_2$ obtained by erasing $y$ and $Y$. Then the mapping class represented by $D$ is the sum of the mapping classes represented by $D_1$ and $D_2$.
 \begin{figure}
 \labellist
 \large\hair 2pt
 \pinlabel $=$ at 108 46
 \pinlabel $+$ at 222 46
 \endlabellist
 \centering
  \includegraphics[width=\textwidth]{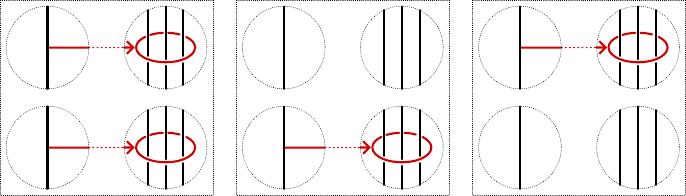}
  \caption{Separating spinning around different spheres in $\Mod(S^4)$. \label{F:CommutingSpins}}
 \end{figure}

 In Figure~\ref{F:SplitASpin} we split the spinning sphere for $2$--handle $A$, then simplify with an isotopy, and then decompose $\wat(\Theta)$ as a sum of two mapping classes, one given by the diagram on the left at the bottom and one given by the diagram on the right at the bottom. In the diagram on the left, we can cancel the $1$-- and $2$--handle pairs easily for all $t$, since there is one dotted circle with only one $2$--handle running over it, and we start the cancellation there. Thus in the end $\wat(\Theta)$ is equal to the mapping class given by the diagram on the bottom right.
 \begin{figure}
  \labellist
  \large\hair 2pt
  \pinlabel $=$ at 145 160
  \pinlabel $=$ at 20 50
  \pinlabel $+$ at 132 50
  \small
  \pinlabel $A$ [r] at 13 120
  \endlabellist
  \centering
  \includegraphics[width=.8\textwidth]{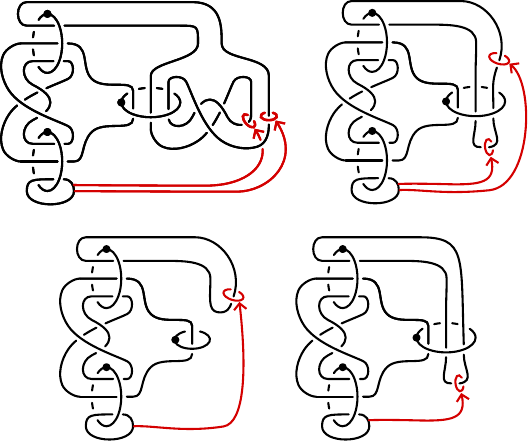}
  \caption{Split the spinning sphere for $A$, isotope and then decompose additively in $\Mod(S^4)$. \label{F:SplitASpin}}
 \end{figure}

 In Figure~\ref{F:SlideBOverA}, we start with the diagram from the bottom right of Figure~\ref{F:SplitASpin}, slide $2$--handle $B$ twice over $2$--handle $A$ to get $B$ off of $a$, and then cancel $a$ and $A$. This is followed by an isotopy and then, again, a decomposition in $\Mod(S^4)$. The diagram on the bottom left easily cancels, so now $\wat(\Theta)$ is described by the diagram on the bottom right.
 \begin{figure}
  \labellist
  \hair 2pt
  \huge\pinlabel $\rightsquigarrow$ at 100 250
  \pinlabel $\rightsquigarrow$ [r] at 2 150
  \large\pinlabel $=$ at 102 150
  \pinlabel $=$ at 0 55
  \pinlabel $+$ at 98 55
  \small
  \pinlabel $A$ [r] at 13 210
  \pinlabel $B$ [r] at 0 230
  \pinlabel $a$ [b] at 20 237
  \endlabellist
  \centering
  \includegraphics[width=.8\textwidth]{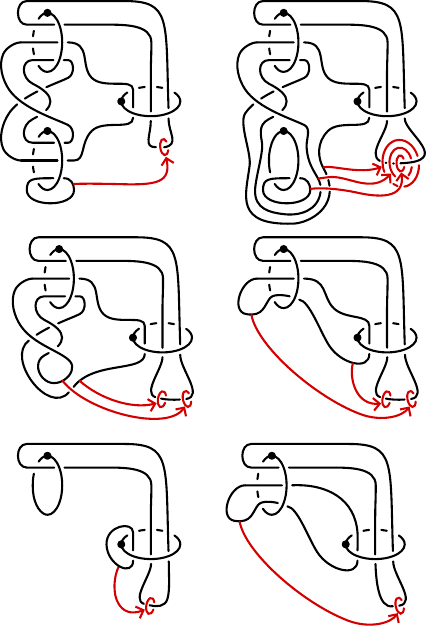}
  \caption{Slide $B$ over $A$, cancel $a$ and $A$, isotope, and then decompose. \label{F:SlideBOverA}}
 \end{figure}
  
 Next, in Figure~\ref{F:SlideCOverB}, we start with the diagram from the bottom right of Figure~\ref{F:SlideBOverA}, slide $C$ twice over $B$ to get $C$ off of $b$, and then cancel $B$ and $b$. Again we follow this with an isotopy and a decomposition, expressing $\wat(\Theta)$ as a sum of two elements of $\Mod(S^4)$, one of which immediately cancels, leaving $\wat(\Theta)$ equal to the mapping class described by the diagram on the bottom right.
 \begin{figure}
  \labellist
  \hair 2pt
  \huge\pinlabel $\rightsquigarrow$ at 91 245
  \pinlabel $\rightsquigarrow$ [r] at 3 138
  \large\pinlabel $=$ at 105 138
  \pinlabel $=$ at 13 40
  \pinlabel $+$ at 108 40
  \small
  \pinlabel $C$ [b] at 40 278
  \pinlabel $B$ [r] at 1 250
  \pinlabel $b$ [r] at 57 231
  \endlabellist
  \centering
  \includegraphics[width=.8\textwidth]{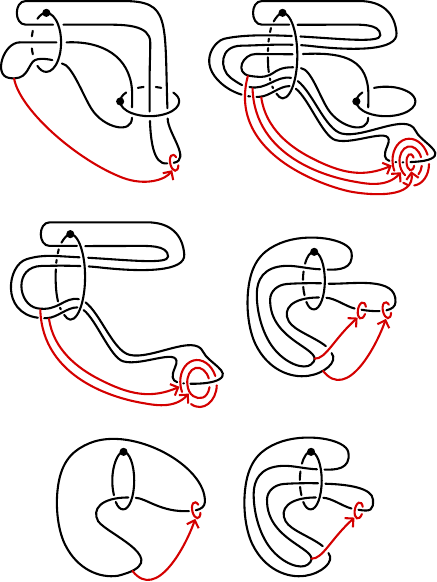}
  \caption{Slide $C$ over $B$, cancel $b$ and $B$, isotope, and then decompose. \label{F:SlideCOverB}}
 \end{figure}
  
 Finally, Figure~\ref{F:FinalIsotopy} shows an isotopy from the diagram on the bottom right of Figure~\ref{F:SlideCOverB} to our target diagram on the right of Figure~\ref{F:BeginningAndEnd}, which is a diagram for $\cH(\alpha(1))$.
 \begin{figure}
  \labellist
  \hair 2pt
  \large\pinlabel $=$ at 72 35
  \pinlabel $=$ at 136 35
  \endlabellist
  \centering
  \includegraphics[width=.6\textwidth]{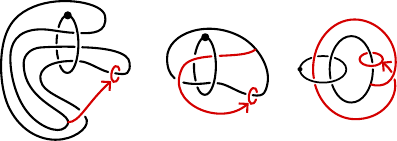}
  \caption{The final isotopy. \label{F:FinalIsotopy}}
 \end{figure}

\end{proof}

%%%%%%%%%%%%%%%%%%%%   End of main body of article
%
%                             References
%
%   BiBTeX users uncomment the following line:
%
\bibliographystyle{amsalpha}
%\bibliographystyle{plain}
%

% \begin{thebibliography}
%
% \end{thebibliography}

\bibliography{ThetaDiffeo}

\end{document}